\documentclass[11pt]{article}
\usepackage{amsmath}
\usepackage{amssymb}
\usepackage{amsthm}
\usepackage{graphicx} 
\usepackage[margin=1in]{geometry}
\usepackage[auth-sc-lg]{authblk}
\usepackage{hyperref}

\newtheorem{theorem}{Theorem}[section]
\newtheorem{proposition}[theorem]{Proposition}
\newtheorem{lemma}[theorem]{Lemma}
\theoremstyle{definition}
\newtheorem{example}[theorem]{Example}
\theoremstyle{remark}
\newtheorem{remark}[theorem]{Remark}

\newcommand{\cay}{\mathrm{Cay}}

\newcommand{\ul}[1]{\underline{#1}}

\title{Directed strongly regular graphs \\
from groups, loops and quasigroups}
\author[1]{\v Stefan Gy\"urki\footnote{e-mail of the corresponding author: {\tt stefan.gyurki@stuba.sk} (\v S. Gy\"urki)}}
\author[2]{Mikhail Klin}
\affil[1]{Faculty of Civil Engineering}
\affil[ ]{Slovak University of Technology}
\affil[ ]{Radlinského 11, 810 05 Bratislava,  Slovak Republic}
\affil[ ]{ }
\affil[2]{Department of Mathematics}
\affil[ ]{Ben-Gurion University of the Negev}
\affil[ ]{84105 Beer Sheva, Israel}
\date{}

\begin{document}

\maketitle

\hrule

\begin{abstract}
We introduce four infinite families of directed strongly regular graphs of
orders $2n^2$ and $3n^2$. The constructions are described in terms of
groups, quasigroups, loops and their Latin squares. Two preliminary Cayley
digraph constructions over wreath products are extended to arbitrary
quasigroups and loops, yielding directed strongly regular graphs with
parameters
$(2n^2,3n-2,2n-1,n-1,3),
(2n^2,4n-2,2n+2,n+2,6),
(3n^2,4n-2,2n,n,4),
(3n^2,6n-2,2n+6,n+6,10).
$
\smallskip

{\bf Keywords:} Directed strongly regular graphs, Latin squares, loops,
quasigroups, Cayley digraphs.
\end{abstract}

\hrule

\section{Introduction}

Directed strongly regular graphs (DSRGs) were introduced by Duval
\cite{du} as a directed analogue of strongly regular graphs. Although a
number of constructions are known, many feasible parameter sets remain
unresolved, and general constructions that do not depend on finite-field
structure are therefore useful.

Several sporadic DSRGs and new realizable parameter sets were previously
obtained by the authors through computer algebra experimentation
\cite{gk}. The present paper takes a complementary approach: instead of
searching for individual examples, it develops uniform constructions
producing infinite families of DSRGs from groups, quasigroups, loops and
their Latin squares.

The purpose of this paper is to give four such constructions. We begin
with two Cayley digraph constructions over wreath products involving an
arbitrary finite group. Their adjacency rules depend only on the Latin
square properties of a group table, which allows us to replace the group
by a quasigroup or a loop. This produces two families on $2n^2$ vertices.
Using three copies of the same $n\times n$ grid instead of two gives two
further families on $3n^2$ vertices. All four constructions work for every
admissible order $n$, without a prime-power hypothesis.

The paper is organized as follows. Section~2 recalls the required facts
about DSRGs, Cayley digraphs, Latin squares, quasigroups and loops.
Section~3 gives the two group-based Cayley constructions. Sections~4 and~5
present and verify the quasigroup and loop constructions on $2n^2$ and
$3n^2$ vertices, respectively. A short summary closes the paper.

\section{Preliminaries}

\subsection{General Concepts}

A \emph{simple graph} $\Gamma$ is a pair $(V,E)$, where $V$ is a finite set of \emph{
vertices}, and $E$ is a set of 2-subsets of $V$ which are called \emph{edges}. 
A \emph{directed graph} (briefly digraph) $\Gamma$ is a pair $(V,R)$ where $V$ is the 
set of \emph{vertices} and $R$ is a binary relation on $V$, that is a subset of the set $V^2$
of all ordered pairs of elements in $V$. 
The pairs in $R$ are called \emph{directed arcs} or \emph{darts}.
The vertex set of $\Gamma$ is denoted by $V(\Gamma)$ and the dart set is denoted by $R(\Gamma)$. 

For any finite group $H$ the {\it group ring} $\mathbb ZH$ is defined as the
set of all formal sums of elements of $H$, with coefficients from $\mathbb Z$. 
Let $X$ denote a non-empty subset of $H$. The
element $\sum_{x\in X}x$ in $\mathbb ZH$ is called a 
{\it simple quantity}, and it is denoted as $\ul{X}$. Suppose now that $e\notin X$,
where $e$ is the identity element of the group $H$.
Then the digraph $\Gamma=\cay(H,X)$ with vertex set $H$ and dart set $\{(x,y):
x,y\in H, yx^{-1}\in X\}$ is called the {\it Cayley digraph over $H$ with respect to~$X$.} 

\subsection{Strongly Regular Graphs}

An undirected graph $\Gamma$ is called \emph{regular} of valency $k$, or \emph{$k$-regular},
if each vertex is incident to the same number $k$ of edges.
In the terms of adjacency matrix of a graph we can equivalently say that 
$\Gamma$ is $k$-regular if and only if for its adjacency matrix $A=A(\Gamma)$ the equation
$AJ=JA=kJ$ holds, where $J$ is the all-one matrix.  
A simple regular graph with valency $k$ is said to be \emph{strongly regular}
(SRG, for short) if there exist integers $\lambda$ and $\mu$ such that for each edge $\{u,v\}$ the number
of common neighbours of $u$ and $v$ is exactly $\lambda$; while for each non-edge
$\{u,v\}$ the number of common neighbours of $u$ and $v$ is equal to $\mu$. 
Previous condition can be rewritten equivalently into the equation $A^2=kI+\lambda A+\mu(J-I-A)$
using the adjacency matrix of $\Gamma$.
The quadruple $(n,k,\lambda,\mu)$ is called the \emph{parameter set} of an SRG $\Gamma$. 

The class of strongly regular graphs is well-studied in AGT,
and there are known numerous constructions giving infinite series of them.
As an example of an infinite family of SRGs we show the so-called 
\emph{square lattice graphs}.

\begin{example}
For any integer $n>1$ denote $\mathbb Z_n$ the cyclic group of order $n$, and let us take the 
set $V(\Gamma)=\mathbb Z_n\times\mathbb Z_n$ as the set of vertices of a graph $\Gamma$. 
Two different vertices $(x_1,y_1)$ and $(x_2,y_2)$ will be adjacent if and only if $x_1=x_2$ or
$y_1=y_2$. Graph $\Gamma$ is usually denoted as $L_2(n)$ and it is a SRG with
parameters $(n^2, 2n-2, n-2, 2)$. 
\end{example}

\subsection{Directed Strongly Regular Graphs}

A possible generalization of the notion of SRGs for directed graphs was given by Duval~\cite{du}.
While the family of SRGs has been well-studied in AGT cf. \cite{bv}, 
the directed version has received less attention.
 
A \emph{directed strongly regular graph} (DSRG) with parameters $(n,k,t,\lambda,\mu)$ is a
regular directed graph on $n$ vertices with valency $k$, such that every vertex is incident with~$t$
undirected edges, and the number of paths of length 2 directed from a vertex $x$ to another vertex $y$ is
$\lambda$, if there is an arc from $x$ to $y$, and $\mu$ otherwise. 
In particular, a DSRG with $t=k$ is an SRG, and a DSRG with $t=0$ is a doubly regular tournament.
Throughout the paper we consider only DSRGs satisfying $0<t<k$, which are called \emph{genuine}
DSRGs. 

The adjacency matrix $A=A(\Gamma)$ of a DSRG with parameters $(n,k,t,\lambda,\mu)$, 
 satisfies $AJ=JA=kJ$ and $A^2=tI+\lambda A+\mu(J-I-A)$. 

\begin{remark}
In this paper we prefer for DSRG's 5-tuple of parameters in the order $(n,k,t,\lambda,\mu)$,
though in several other papers the order $(n,k,\mu,\lambda,t)$ is used. 
\end{remark}

\begin{proposition}[\cite{du}]
If $\Gamma$ is a DSRG with parameter set $(n,k,t,\lambda,\mu)$ and adjacency matrix $A$, 
then the complementary graph $\bar\Gamma$ is a DSRG with parameter set $(n,\bar k,\bar t,
\bar\lambda,\bar\mu)$ with adjacency matrix $\bar A=J-I-A$, where 
\begin{eqnarray*}
\bar k &=& n-k-1 \\
\bar t &=& n-2k+t-1 \\
\bar \lambda & =& n-2k+\mu-2 \\
\bar \mu &=& n-2k+\lambda.
\end{eqnarray*}
\label{prop2}
\end{proposition}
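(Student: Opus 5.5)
The plan is to work entirely with adjacency matrices, using the matrix characterization recalled above: a $(0,1)$-matrix $A$ with zero diagonal is the adjacency matrix of a DSRG with parameters $(n,k,t,\lambda,\mu)$ if and only if $AJ=JA=kJ$ and $A^2=tI+\lambda A+\mu(J-I-A)$. So it suffices to show that $\bar A=J-I-A$ is again a zero-diagonal $(0,1)$-matrix satisfying both identities with the barred parameters. The first point is immediate, since $\bar A$ has zeros exactly where $A$ has ones off the diagonal and zeros on the diagonal.

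For regularity we use $JJ=nJ$. This gives
\[
\bar AJ=nJ-J-kJ=(n-k-1)J,
\]
and $J\bar A$ gives the same value. Hence $\bar k=n-k-1$.

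The main step is to expand $\bar A^2=(J-I-A)^2$. The matrices $J$, $I$ and $A$ pairwise commute, because $AJ=JA$. Using $J^2=nJ$ and $AJ=JA=kJ$, we get
\[
\bar A^2=nJ-2J-2kJ+I+2A+A^2 .
\]
Next substitute $A^2=tI+\lambda A+\mu(J-I-A)$ and collect the coefficients of $I$, $J$ and $A$:
\[
\bar A^2=(1+t-\mu)I+(n-2-2k+\mu)J+(2+\lambda-\mu)A .
\]
To finish, eliminate $A$ by writing $A=J-I-\bar A$. This yields
\[
\bar A^2=(t-1-\lambda+2\mu-2)I\cdot 0+\ldots,
\]
so it is cleaner to compare directly with the target form. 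We want
\[
\bar A^2=\bar tI+\bar\lambda\bar A+\bar\mu(J-I-\bar A)=(\bar t-\bar\mu)I+\bar\mu J+(\bar\lambda-\bar\mu)\bar A .
\]
Since $J-I-\bar A=A$, the target can be rewritten as
\[
\bar tI+\bar\lambda(J-I-A)+\bar\mu A=(\bar t-\bar\lambda)I+\bar\lambda J+(\bar\mu-\bar\lambda)A .
\]
Matching the coefficients of $J$, $A$ and $I$ with the expansion of $\bar A^2$ gives three conditions:
\[
\bar\lambda=n-2k+\mu-2,\qquad \bar\mu-\bar\lambda=2+\lambda-\mu,\qquad \bar t-\bar\lambda=1+t-\mu .
\]
The second condition gives $\bar\mu=n-2k+\lambda$. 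The third gives $\bar t=n-2k+t-1$. These are exactly the stated formulas.

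The only delicate point is the uniqueness of the coefficients. The parameters are read off from the matrix identity rather than from a comparison in a basis, so strictly speaking we only need that the identity holds with the stated values, and it does by the computation above. Finally, we check that the combinatorial meaning of $\bar t$ agrees with the matrix identity. The diagonal of $\bar A^2$ counts the undirected edges at each vertex of $\bar\Gamma$. On the right-hand side, the diagonal entry is $\bar t$, because $J-I-\bar A=A$ and $\bar A$ both have zero diagonal. So the matrix identity already encodes the definition of $\bar t$, and no separate argument is needed.
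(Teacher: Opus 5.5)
Your proof is correct. The paper gives no proof of its own and simply cites Duval. Your computation is the standard argument behind that citation: expand $(J-I-A)^2$ using $J^2=nJ$ and $AJ=JA=kJ$, substitute the DSRG identity for $A^2$, and check that the coefficients of $I$, $J$ and $A$ agree with the stated $\bar t,\bar\lambda,\bar\mu$. One presentational fix: delete the aborted display $\bar A^2=(t-1-\lambda+2\mu-2)I\cdot 0+\ldots$, which is meaningless and not used anywhere in the argument.
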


\begin{remark}
Proposition \ref{prop2} allows us to restrict our search for the DSRGs with $2k<n$, 
due to complementation, and clearly a discovery of a DSRG with new parameter set
implies a discovery of a DSRG on the complementary parameter set. 
As a consequence, throughout the paper we display just the parameter sets satisfying $2k<n$. 
\end{remark}

For a directed graph $\Gamma$ let $\Gamma^T$ denote the digraph obtained by reversing
all the darts in $\Gamma$. Then $\Gamma^T$ is called the \emph{reverse} of $\Gamma$. In other words,
if $A$ is the adjacency matrix of~$\Gamma$, then $A^T$ is the adjacency matrix of $\Gamma^T$.

The following proposition was observed by Ch. Pech, and presented in \cite{km}, see also \cite{kp}:

\begin{proposition}[\cite{km}]
Let $\Gamma$ be a DSRG. Then the graph $\Gamma^T$ is also a DSRG with the same parameter set. 
\label{prop1}
\end{proposition}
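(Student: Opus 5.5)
The natural approach is purely matrix-theoretic. A digraph $\Gamma$ with adjacency matrix $A$ is a DSRG with parameters $(n,k,t,\lambda,\mu)$ exactly when $A$ is a $0/1$ matrix with zero diagonal such that
\[
AJ=JA=kJ \quad\text{and}\quad A^2=tI+\lambda A+\mu(J-I-A).
\]
Since $\Gamma^T$ has adjacency matrix $A^T$, it suffices to show that $A^T$ satisfies the same conditions with the same five parameters.

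The argument proceeds in three steps.

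\textbf{Admissibility of $A^T$.} The matrix $A^T$ is again a $0/1$ matrix with zero diagonal, so $\Gamma^T$ has no loops and is a simple digraph on the same $n$ vertices.

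\textbf{Regularity.} Transposing $AJ=kJ$ and using $J^T=J$ gives $JA^T=kJ$. Likewise, transposing $JA=kJ$ gives $A^TJ=kJ$. Hence $\Gamma^T$ is regular of valency $k$.

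\textbf{The quadratic identity.} We use $(A^2)^T=(A^T)^2$ together with $I^T=I$ and $J^T=J$. Transposing both sides of $A^2=tI+\lambda A+\mu(J-I-A)$ yields
\[
(A^T)^2=tI+\lambda A^T+\mu(J-I-A^T).
\]
This is exactly the defining identity with the same $t,\lambda,\mu$.

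It remains to check the combinatorial meaning of $t$. Undirected edges of $\Gamma$, meaning pairs with arcs in both directions, are precisely the undirected edges of $\Gamma^T$. So each vertex still lies on $t$ of them, which is consistent with the diagonal of $(A^T)^2$ being $tI$. In particular the genuineness condition $0<t<k$ is preserved, so $\Gamma^T$ is also genuine whenever $\Gamma$ is.

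There is no real obstacle here. The only point requiring care is to confirm that the algebraic characterization (zero diagonal, the $0/1$ condition, the $J$-relations and the quadratic identity) is equivalent to the combinatorial definition. This equivalence is the one already stated in the paper just before Proposition~\ref{prop2}.
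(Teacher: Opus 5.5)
Your proof is correct: transposing $AJ=JA=kJ$ and $A^2=tI+\lambda A+\mu(J-I-A)$ is the standard argument for this proposition, which the paper does not prove itself but only attributes to Pech via the cited work. The one step beyond what the paper writes is the converse of the matrix characterization, since the paper only states that a DSRG's adjacency matrix satisfies these identities. That converse is immediate: the diagonal entries of $(A^T)^2$ count undirected edges, and the off-diagonal entries count directed $2$-paths.
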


We say that two DSRGs $\Gamma_1$ and $\Gamma_2$ are \emph{equivalent}, if $\Gamma_1\cong\Gamma_2$, 
or $\Gamma_1\cong\Gamma_2^T$, or $\Gamma_1\cong\bar\Gamma_2$, or $\Gamma_1\cong\bar\Gamma_2^T$;
otherwise they are called \emph{non-equivalent}. (In other words, $\Gamma_1$ is equivalent to 
$\Gamma_2$ if and only if $\Gamma_1$ is isomorphic to $\Gamma_2$ or to a graph, obtained from
$\Gamma_2$ via reverse and complementation.) 
From our point of view the interesting DSRGs are those which are non-equivalent. 

The parameters $n,k,t,\lambda,\mu$ are not independent. Relations, which have to be satisfied 
for such parameter sets are usually called \emph{feasibility conditions}. 
Most important, and, in a sense, basic conditions are the following (for their proof see \cite{du}):

\begin{equation}
k(k+\mu-\lambda)=t+(n-1)\mu. \label{c1}
\end{equation}

There exists a positive integer $d$ such that:
\begin{eqnarray}
d^2&=&(\mu-\lambda)^2+4(t-\mu) \label{c2}\\
d &\mid& (2k-(\mu-\lambda)(n-1)) \label{c3}\\
n-1 &\equiv& \frac{2k-(\mu-\lambda)(n-1)}{d}  \pmod 2 \label{c4}\\
n-1 &\geq &\left|\frac{2k-(\mu-\lambda)(n-1)}{d}\right|.\label{c5}
\end{eqnarray}

Further:
\[
\begin{array}{rcccl}
0 & \leq & \lambda < t & < & k \\
0 & < & \mu \leq t & < & k \\
-2(k-t-1) & \leq & \mu  -  \lambda & \leq & 2(k-t).  
\end{array}
\]

We have to mention that for a feasible parameter set 
it is not guaranteed that a DSRG with that parameter set does exist. 
A feasible parameter set for which at least one DSRG $\Gamma$ exists is called \emph{realizable},
otherwise \emph{non-realizable}.
The smallest example of a non-realizable parameter set is $(14,5,4,1,2)$, 
what was shown in \cite{km}.

Known constructions, references, and further data 
concerning DSRGs are collected in one place on the homepage of 
A.E.~Brouwer and S.~Hobart, see \cite{ab}.

\subsection{Latin squares, loops and quasigroups}

Let us denote $I_n=\{1,2,\ldots,n\}$.
A \emph{Latin square of order $n$} is a quadruple $(R,C,S;\ell)$ 
where $R,C,S$ are sets of cardinality $n$ (usually identified with $I_n$), 
called \emph{rows, columns} and \emph{symbols} respectively and $\ell$ 
is a mapping $\ell: R\times C\to S$ such that
for any $i\in R$ and $x\in S$ the equation $\ell(i,j)=x$ has unique 
solution $j\in C$, and for any $j\in C$, $x\in S$ the same equation
has a unique solution $i\in R$. In other words, a Latin square of order $n$
is an $n\times n$ array with $n$ different entries, such that each entry 
occurs exactly once in any row and in any column of the array.

Another representation of Latin squares turns out to be useful.
The \emph{ortogonal array} representation of a Latin square $L$ is 
a set of $n^2$ ordered triplets $\{(i,j,\ell(i,j))\,|\, (i,j)\in I_n\times I_n\}$.
It is easy to observe from the definition, that each ordered pair 
$(i,j)$ occurs exactly once in the first two positions, exactly once on the
last two positions and exactly once in the first and third positions. 

When a Latin square $L$ is a Cayley table of a group $H$, then
such a Latin square will be called a \emph{group Latin square}.

A \emph{quasigroup} is a set $Q$ with a binary operation ``$\cdot$'' 
such that for all $a, b\in Q$ the equations $a \cdot x = b$ and $y \cdot a = b$ 
have a unique solution in $Q$. It is easy to see that every Latin square may be 
interpreted as a multiplication table of a quasigroup, and for each quasigroup 
its Cayley table provides a Latin square.

A \emph{loop} $L$ is a quasigroup with an identity element $e \in L$ with the property
$e\cdot x = x\cdot e = x$ for every $x \in L$. Usually, the identity element $e$ is identified with
$1 \in I_n$. Then we may say that each loop naturally defines a \emph{reduced} 
(or \emph{normalized}) Latin square and each reduced Latin square may be interpreted 
as a Cayley table of a loop. Obviously, an associative loop is a group.

\subsection{The standard grid parameter family}

The principal family considered below has parameters
$(2n^2,3n-2,2n-1,n-1,3)$.
Earlier constructions of this family used finite-field or partial-sum-family
methods and therefore assumed that $n$ was a prime power; see \cite{ar, fm}.
The constructions below use only a quasigroup table and work for arbitrary integers $n\geq2$.

\section{Cayley constructions from groups}

We use the following standard group-ring criterion.

\begin{lemma}[\cite{hs,km}]
The Cayley digraph $\cay(H,X)$ is a DSRG with parameter set
$(n,k,t,\lambda,\mu)$ if and only if
\[
\ul{X}^2=t\ul{e}+\lambda\ul{X}
  +\mu(\ul{H}-\ul{e}-\ul{X})
\]
in the group ring $\mathbb ZH$.
\label{lem1}
\end{lemma}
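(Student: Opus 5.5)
The plan is to translate the DSRG condition for $\cay(H,X)$ into the group-ring identity by identifying the adjacency matrix with the image of $\ul{X}$ under the right regular representation. For each $g\in H$ let $P_g$ be the $|H|\times|H|$ permutation matrix with $(P_g)_{x,y}=1$ iff $y=gx$. In $\cay(H,X)$ there is a dart $(x,y)$ iff $yx^{-1}\in X$, so the adjacency matrix is
\[
A=\sum_{g\in X}P_g .
\]
The map $\rho:\mathbb ZH\to M_{|H|}(\mathbb Z)$, $\sum a_g g\mapsto \sum a_g P_g$, is a ring homomorphism, since $P_gP_h=P_{hg}$ or $P_{gh}$ depending on the convention. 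If it is the reversed order, then $\rho$ is an anti-homomorphism. This does no harm here, because we only square a single element and combine linearly, and $\rho(\ul{X})^2=\rho(\ul{X}^2)$ holds either way.

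The key steps are as follows.

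\textbf{Step 1.} Check that $\rho$ is injective. For $x=e$, the $(e,y)$ entry of $\rho(\sum a_g g)$ equals $a_y$, so the coefficients are recovered from the row indexed by $e$.

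\textbf{Step 2.} Compute the images of the relevant elements: $\rho(\ul{e})=I$, $\rho(\ul{H})=J$ (each pair $x,y$ has exactly one $g=yx^{-1}$), and $\rho(\ul{X})=A$.

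\textbf{Step 3.} Applying $\rho$ to the identity $\ul{X}^2=t\ul{e}+\lambda\ul{X}+\mu(\ul{H}-\ul{e}-\ul{X})$ gives exactly $A^2=tI+\lambda A+\mu(J-I-A)$. Conversely, that matrix identity pulls back to the group-ring identity by injectivity.

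\textbf{Step 4.} Handle regularity, since the matrix characterization of a DSRG also requires $AJ=JA=kJ$. For a Cayley digraph this is automatic with $k=|X|$: each row and each column of $A$ has exactly $|X|$ ones, because every $P_g$ is a permutation matrix and the $P_g$ for distinct $g$ have disjoint supports. Also $e\notin X$ ensures there are no loops, i.e.\ the diagonal of $A$ is zero. Hence $\cay(H,X)$ is a DSRG with the given parameters iff $A^2=tI+\lambda A+\mu(J-I-A)$, iff the stated group-ring equation holds. Here $n=|H|$ and $k=|X|$ are implicit in the statement.

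The only delicate point is the left/right convention in Steps 2 and 3. Alternatively, one can avoid matrices entirely. Count directly: the number of paths $x\to z\to y$ equals the number of factorizations $yx^{-1}=(yz^{-1})(zx^{-1})$ with both factors in $X$, which is the coefficient of $yx^{-1}$ in $\ul{X}^2$. This coefficient is $t$ for $y=x$, $\lambda$ when $yx^{-1}\in X$, and $\mu$ otherwise. I would present this counting version as the main argument, since it sidesteps the homomorphism convention altogether.
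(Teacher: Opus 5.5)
Your proof is correct. The paper does not prove this lemma; it only cites it from the literature. Your argument is the standard one behind that citation: the $2$-paths $x\to z\to y$ correspond bijectively, via $z\mapsto(yz^{-1},zx^{-1})$, to pairs $(a,b)\in X\times X$ with $ab=yx^{-1}$, so their number is the coefficient of $yx^{-1}$ in $\ul{X}^2$, while regularity with $k=|X|$ and the absence of loops (from $e\notin X$) are automatic. Your handling of the convention is also sound: with $(P_g)_{x,y}=1$ iff $y=gx$ one gets $P_gP_h=P_{hg}$, so $\rho$ is an anti-homomorphism, yet $\rho(\ul{X})^2=\rho(\ul{X}^2)$ still holds, so the matrix version and the counting version are equally valid.
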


Let us consider the group $H=S_2 \wr K$, the wreath product of the symmetric group $S_2$ 
on two symbols $\{1,2\}$, and an arbitrary group $K$ of order $n$. 
Thus, any element $h$ of $H$ can be represented as $h=(g; k_1,k_2)$, 
where $g\in S_2$, $k_1,k_2\in K$ and the multiplication 
of element $h=(g; k_1,k_2)$ with $h'=(g'; k'_1,k'_2)$ is given by 
$hh'=(gg'; k_1k'_{1^g},k_2k'_{2^g})$. 
(Here $1^g$ and $2^g$ denotes the images of $1$ and $2$
under $g\in S_2$, respectively.)
In the rest of this section we use the following notation for certain subsets of~$H$:
$$A=\bigcup_{\substack{u\in K\\ u\neq e_K}}\{(e;u,u)\},\, 
B=\bigcup_{\substack{u\in K\\ u\neq e_K}}\{(e;e_K,u)\},\,
C=\bigcup_{u\in K}\{(\pi;u,e_K)\},\, D=\bigcup_{u\in K}\{(\pi;u,w)\},$$ 
where $\pi$ is the transposition $(1,2)$ in $S_2$ and $ w$ is an 
arbitrary non-identity element of $K$. 
Similarly, let us put
$$ H_0=\{(e; x,y)\in H: x,y\in K\},\quad H_1=\{(\pi;x,y)\in H: x,y\in K\}.$$

Now, we are ready to formulate our results.

\begin{theorem}\label{thm:3.2}
Let $n>1$ be an integer and $X=A\cup B\cup C$.
Then $\cay(H, X)$ is a DSRG with parameter set $(2n^2,3n-2,2n-1,n-1,3)$. 
\label{thm1}
\end{theorem}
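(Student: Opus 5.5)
By Lemma~\ref{lem1}, it suffices to verify the group ring identity
\[
\ul{X}^2=(2n-1)\ul{e}+(n-1)\ul{X}+3(\ul{H}-\ul{e}-\ul{X})
\]
in $\mathbb ZH$, where $|X|=(n-1)+(n-1)+n=3n-2$. I would expand $\ul{X}^2=(\ul A+\ul B+\ul C)^2$ into its nine products $\ul{A}\,\ul{A},\ul{A}\,\ul{B},\dots,\ul{C}\,\ul{C}$ and compute each one directly from the multiplication rule $hh'=(gg';k_1k'_{1^g},k_2k'_{2^g})$. The key point is that $\pi$ swaps the coordinates of the second factor. Each product will be described as a multiset of elements of the form $(e;x,y)$ or $(\pi;x,y)$.

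\textbf{Products inside $H_0$.} Since $A,B\subseteq H_0$ and $C\subseteq H_1$, the products $\ul A\,\ul A$, $\ul A\,\ul B$, $\ul B\,\ul A$, $\ul B\,\ul B$ and $\ul C\,\ul C$ land in $H_0$, and the mixed products with $C$ land in $H_1$. I expect:
\begin{itemize}
\item $\ul A^2=(n-1)\ul e+(n-2)\ul A$, because $A\cup\{e\}$ is the diagonal subgroup isomorphic to $K$.
\item $\ul B^2=(n-1)\ul e+(n-2)\ul B$, for the same reason.
\item $\ul A\,\ul B+\ul B\,\ul A$ is twice the sum of all $(e;x,y)$ with $x\neq e_K$ and $y\neq x$. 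Here $(e;u,u)(e;e_K,v)=(e;u,uv)$ and $(e;e_K,v)(e;u,u)=(e;u,vu)$, with $u$ and $v$ determined uniquely by $(x,y)$.
\item $\ul C^2=\ul{H_0}$, since $(\pi;u,e_K)(\pi;v,e_K)=(e;u,v)$.
\end{itemize}

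\textbf{Products landing in $H_1$.} From $(e;u,u)(\pi;v,e_K)=(\pi;uv,u)$ and $(\pi;v,e_K)(e;u,u)=(\pi;vu,u)$, the sum $\ul A\,\ul C+\ul C\,\ul A$ is twice the sum of all $(\pi;x,y)$ with $y\neq e_K$. Next, $\ul B\,\ul C$ is the sum of all $(\pi;x,y)$ with $y\neq e_K$, each taken once. Finally, $(\pi;v,e_K)(e;e_K,u)=(\pi;vu,e_K)$ gives $\ul C\,\ul B=(n-1)\ul C$.

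\textbf{Collecting coefficients.} I would then partition $H$ into the following classes and read off each coefficient:
\begin{itemize}
\item $\{e\}$: coefficient $(n-1)+(n-1)+1=2n-1=t$.
\item $A$: coefficient $(n-2)+1=n-1$, since $\ul A\,\ul B+\ul B\,\ul A$ avoids the diagonal.
\item $B$: coefficient $(n-2)+1=n-1$, since elements with $x=e_K$ are excluded from $\ul A\,\ul B+\ul B\,\ul A$.
\item $C$: coefficient $n-1$, coming only from $\ul C\,\ul B$.
\item The remaining elements of $H_0$ (those with $x\neq e_K$ and $y\neq x$): coefficient $2+1=3$.
\item The remaining elements of $H_1$ (those with $y\neq e_K$): coefficient $2+1=3$.
\end{itemize}
This matches the required identity, so Lemma~\ref{lem1} gives the result.

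\textbf{Main obstacle.} The only delicate point is bookkeeping. One has to get the coordinate swap right in products whose left factor lies in $H_1$, such as $\ul C\,\ul A$, $\ul C\,\ul B$ and $\ul C\,\ul C$. One also has to check that the classes above really partition $H$, in particular that the non-diagonal, non-$B$ part of $H_0$ is exactly the set covered by $\ul A\,\ul B+\ul B\,\ul A$. I would handle this by fixing a target element $(g;x,y)$ and, for each of the nine products, counting the solutions $(u,v)$ explicitly.
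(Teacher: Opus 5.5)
Your proposal is correct and is essentially the paper's proof. The paper also expands $(\ul A+\ul B+\ul C)^2$ into the nine products and obtains exactly your values: $\ul A^2=(n-1)\ul e+(n-2)\ul A$, $\ul B^2=(n-1)\ul e+(n-2)\ul B$, $\ul A\,\ul B=\ul B\,\ul A=\ul{H_0}-\ul A-\ul B-\ul e$, $\ul A\,\ul C=\ul C\,\ul A=\ul B\,\ul C=\ul{H_1}-\ul C$, $\ul C\,\ul B=(n-1)\ul C$ and $\ul C^2=\ul{H_0}$. It then collects coefficients and applies Lemma~\ref{lem1}. Your explicit identifications $\{(e;x,y):x\neq e_K,\,y\neq x\}=H_0\setminus(A\cup B\cup\{e\})$ and $\{(\pi;x,y):y\neq e_K\}=H_1\setminus C$ are precisely what the paper's shorthand $\ul{H_0}-\ul A-\ul B-\ul e$ and $\ul{H_1}-\ul C$ encodes.
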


\begin{proof} 
Let us compute:
\[
\ul{X}\cdot\ul{X} = (\ul{A}+\ul{B}+\ul{C})^2=(n-1)\cdot\ul{e}+(n-2)\cdot\ul{A}+(\ul{H_0}-
\ul{A}-\ul{B}-\ul{e})+(\ul{H_1}-\ul{C})+\]
\[
+(\ul{H_0}-\ul{A}-\ul{B}-\ul{e})+(n-1)\cdot\ul{e}+(n-2)\cdot\ul{B}+(\ul{H_1}-\ul{C})+(\ul{H_1}-\ul{C})+
(n-1)\cdot\ul{C}+\ul{H_0}=\]
\[=(2n-1)\cdot\ul{e}+(n-1)\cdot \ul{X}+3\cdot(\ul{H}-\ul{e}-\ul{X}).
\]
\end{proof}

\begin{theorem}\label{thm:3.3}
Let $n>2$ be an integer and $Y=X\cup D=A\cup B\cup C\cup D$. 
Then $\cay(H, Y)$ is a DSRG with parameter set $(2n^2,4n-2,2n+2,n+2,6)$. 
\label{thm2}
\end{theorem}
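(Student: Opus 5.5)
We apply Lemma~\ref{lem1} to $Y=X\cup D$. We do not expand $\ul{Y}^2$ from scratch. Instead we write
\[
\ul{Y}^2=\ul{X}^2+\ul{X}\,\ul{D}+\ul{D}\,\ul{X}+\ul{D}^2
\]
and reuse the identity $\ul{X}^2=(2n-1)\ul{e}+(n-1)\ul{X}+3(\ul{H}-\ul{e}-\ul{X})$ from the proof of Theorem~\ref{thm1}.

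First we identify what the remaining terms must contribute. We subtract $\ul{X}^2$ from the target $(2n+2)\ul{e}+(n+2)\ul{Y}+6(\ul{H}-\ul{e}-\ul{Y})$, using $\ul{Y}=\ul{X}+\ul{D}$. The coefficients of $\ul{e}$ and of $\ul{X}$ then cancel exactly. So it suffices to prove
\[
\ul{X}\,\ul{D}+\ul{D}\,\ul{X}+\ul{D}^2=3\ul{H}+(n-4)\ul{D}=3\ul{H_0}+3(\ul{H_1}-\ul{D})+(n-1)\ul{D}.
\]

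Next we compute the seven products $\ul{A}\,\ul{D}$, $\ul{D}\,\ul{A}$, $\ul{B}\,\ul{D}$, $\ul{D}\,\ul{B}$, $\ul{C}\,\ul{D}$, $\ul{D}\,\ul{C}$, $\ul{D}^2$ directly from the multiplication rule $hh'=(gg';k_1k'_{1^g},k_2k'_{2^g})$. Each time we count how often each element of $H_0$ or $H_1$ is hit, using only that left and right translations in $K$ are bijections.

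\begin{itemize}
\item $\ul{D}^2=\ul{H_0}$, since $(\pi;u,w)(\pi;u',w)=(e;uw,wu')$ with $u,u'$ free.
\item $\ul{C}\,\ul{D}=\ul{H_0}$ and $\ul{D}\,\ul{C}=\ul{H_0}$, by the analogous computations $(e;cw,u)$ and $(e;u,wc)$.
\item $\ul{A}\,\ul{D}=\ul{H_1}-\ul{D}$, since the products are $(\pi;au,aw)$ with $a\ne e_K$, so the second coordinate runs over $K\setminus\{w\}$.
\item $\ul{D}\,\ul{A}=\ul{H_1}-\ul{D}$, since the products are $(\pi;ua,wa)$ with $a\ne e_K$.
\item $\ul{B}\,\ul{D}=\ul{H_1}-\ul{D}$, since the products are $(\pi;u,bw)$ with $b\ne e_K$.
\item $\ul{D}\,\ul{B}=(n-1)\ul{D}$, since the products are $(\pi;ub,w)$: the second coordinate is always $w$, and each $(\pi;x,w)$ arises $n-1$ times.
\end{itemize}

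Summing these gives exactly $3\ul{H_0}+3(\ul{H_1}-\ul{D})+(n-1)\ul{D}$, as required. This also confirms that $D$ is disjoint from $X$, so that $|Y|=4n-2$.

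The main obstacle is bookkeeping with the wreath-product twist. The index swap $1^\pi=2$ means that a left factor $(\pi;\cdot)$ reverses which coordinate of the right factor it multiplies. This is what makes $\ul{D}\,\ul{B}$ collapse onto $D$ while $\ul{B}\,\ul{D}$ spreads over $H_1-D$, so this asymmetric pair must be checked carefully.

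Finally, $w\ne e_K$ requires $n\ge2$. The hypothesis $n>2$ ensures $t=2n+2<k=4n-2$, so the DSRG is genuine, and also that $\lambda=n+2<t$. We would also confirm $e\notin Y$, which holds because $e$ is not in $X$ and $D\subseteq H_1$.
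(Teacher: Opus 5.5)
Your proposal is correct and is essentially the paper's own proof. You split $\ul{Y}^2=\ul{X}^2+\ul{X}\,\ul{D}+\ul{D}\,\ul{X}+\ul{D}^2$, reuse the identity for $\ul{X}^2$ from Theorem~\ref{thm1}, and obtain exactly the paper's seven products: three copies of $\ul{H_1}-\ul{D}$, three copies of $\ul{H_0}$, and $(n-1)\ul{D}$ from $\ul{D}\,\ul{B}$. Each of these checks out against the multiplication rule.

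One small point: $X\cap D=\emptyset$ follows directly from $w\neq e_K$, and you already use it when you write $\ul{Y}=\ul{X}+\ul{D}$. So it is an input to the computation, not something the final sum confirms.
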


\begin{proof} 
Using computations from the proof of the previous theorem we are getting:
\[ \ul{Y}\cdot\ul{Y}=(\ul{X}+\ul{D})^2=\ul{X}^2+\ul{X}\cdot\ul{D}+\ul{D}\cdot\ul{X}+
\ul{D}\cdot\ul{D}=(2n-1)\cdot\ul{e}+(n-1)\cdot\ul{X}+
\]
\[
+3\cdot(\ul{H}-\ul{e}-\ul{X})+(\ul{H_1}-\ul{D})+(\ul{H_1}-\ul{D})+\ul{H_0}+
\ul{H_0}+(\ul{H_1}-\ul{D})+(n-1)\cdot\ul{D}+\ul{H_0}=
\]
\[
=(2n+2)\cdot\ul{e}+(n+2)\cdot\ul{Y}+6\cdot(\ul{H}-\ul{e}-\ul{Y}).
\]
\end{proof}

\begin{remark} For $n=2$ the previous construction gives us an undirected
graph of order 8, which is a complement to a perfect matching on 8 vertices.  
\end{remark}

\begin{remark} We would like to notice that if $|K|=n$, then
the digraphs constructed in previous two theorems contain two disjoint induced copies 
of the square lattice graphs of order $n^2$, due to the appearance of sets $A$ and $B$ 
as subsets in the connection set of the considered Cayley graphs. 
\end{remark}

\begin{remark} We refer to the families in Theorems~\ref{thm1}
and~\ref{thm2} as Constructions 1 and 2, respectively.
While in Construction 1 the resulting Cayley graph is
uniquely determined by the group $K$, in Construction 2 we have a freedom
for the choice of element $w$ in the connection set $D$, so for the same
group we can get more than one DSRG.
\end{remark}

\section{Constructions on $2n^2$ vertices}

In this section we generalize our Constructions 1 and 2 which worked with the aid of a group.
We extend our constructions for the case, when the starting object is a loop or a quasigroup. 

Let $L$ be a loop (quasigroup) of order $n$. 
We refer to the symbol appearing in the $x$-th row and $y$-th column as $xy$. 

\subsection{First construction in terms of quasigroups (loops)}

Define a digraph $\Gamma_1$ of order $2n^2$, whose 
vertex set is $V(\Gamma_1)= I_n\times I_n\times\mathbb Z_2$. 
The set $D(\Gamma_1)$ of darts is defined as follows:
\begin{itemize}
\item $(x,y,i)\mapsto(z,y,i)$ for all $i\in\mathbb Z_2$, $x,y,z\in I_n$, $x\neq z$;
\item $(x,y,i)\mapsto(x,z,i)$ for all $i\in\mathbb Z_2$, $x,y,z\in I_n$, $y\neq z$;
\item $(x,y,0)\mapsto(xy,z,1)$ for all $z\in  I_n$;
\item $(x,y,1)\mapsto(z,yx,0)$ for all $z\in  I_n$.
\end{itemize}

\begin{theorem}\label{thm:4.1}
The digraph $\Gamma_1$ is a DSRG with parameter set $(2n^2,3n-2,2n-1,n-1,3)$.
\end{theorem}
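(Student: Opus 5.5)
The plan is to compute $A^2$ for the adjacency matrix $A$ of $\Gamma_1$ directly, in block form, and compare it with $tI+\lambda A+\mu(J-I-A)$ for $(t,\lambda,\mu)=(2n-1,n-1,3)$. This is a matrix analogue of the group-ring check of Lemma~\ref{lem1} used in Theorem~\ref{thm1}. We cannot use the group ring itself, since the quasigroup gives no group structure.

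Order the vertices as layer $0$ followed by layer $1$, each layer indexed by $I_n\times I_n$. Then
\[
A=\begin{pmatrix} L & M\\ N & L\end{pmatrix},
\]
where:
\begin{itemize}
\item $L$ is the adjacency matrix of the square lattice graph $L_2(n)$ on the grid, with rows and columns over the same index set; this structure does not depend on the quasigroup.
\item $M_{(x,y),(a,z)}=1$ if and only if $a=xy$.
\item $N_{(x,y),(w,c)}=1$ if and only if $c=yx$.
\end{itemize}
From the Example, $L^2=(2n-2)I+(n-2)L+2(J-I-L)$ and $LJ=JL=(2n-2)J$.

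The quasigroup property enters only through the following counts.
\begin{itemize}
\item Each row of $M$ and of $N$ has exactly $n$ ones. Each column of $M$ also has $n$ ones, because for fixed $a$ the equation $xy=a$ has $n$ solutions $(x,y)$; the same holds for $N$. Hence $AJ=JA=(3n-2)J$.
\item $MN=J$. A path $(x,y,0)\to(xy,z,1)\to(w,c,0)$ requires $z\cdot(xy)=c$, which has a unique solution $z$. Symmetrically, $NM=J$.
\item $LM=2(J-M)$. From $(x,y)$, the row neighbours $(x,y')$ with $y'\neq y$ and $xy'=a$ number exactly one if $a\neq xy$ and none otherwise. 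The column neighbours behave the same way.
\item $ML=(n-2)M+J$. A path $(x,y)\to(xy,z)\to(c,d)$ gives $n-1$ choices of $z$ when $c=xy$, and exactly one choice ($z=d$) when $c\ne xy$.
\end{itemize}
The analogous identities $LN=2(J-N)$ and $NL=(n-2)N+J$ follow by the same argument with the roles of rows and columns swapped.

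Assembling the blocks:
\begin{itemize}
\item The diagonal blocks are $L^2+MN=(2n-1)I+(n-1)L+3(J-I-L)$.
\item The off-diagonal blocks are $LM+ML=(n-1)M+3(J-M)$, and likewise with $N$.
\end{itemize}
Together these give exactly $A^2=(2n-1)I+(n-1)A+3(J-I-A)$.

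As a sanity check on $t$, the diagonal entry $2n-1$ of $A^2$ comes from the $2n-2$ lattice edges plus one undirected edge $(x,y,0)\leftrightarrow(xy,z,1)$, namely for the unique $z$ with $z(xy)=y$. The conditions $AJ=JA=kJ$ and $A^2=tI+\lambda A+\mu(J-I-A)$ are exactly the definition of a DSRG, so the theorem follows. For $n\ge2$ we have $0<t<k$, so the DSRG is genuine.

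I expect the main obstacle to be the mixed products $LM$ and $ML$. They must be set up with the correct orientation of the quasigroup product: $xy$ in $M$ versus $yx$ in $N$, together with left versus right division. It also takes care to see that the row and column contributions combine to a multiple of $J-M$.
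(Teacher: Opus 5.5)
Your proof is correct, but it is organized differently from the paper's. You verify $A^2=(2n-1)I+(n-1)A+3(J-I-A)$ globally, using the block form $A=\begin{pmatrix}L&M\\N&L\end{pmatrix}$ and six product identities: $MN=NM=J$, $LM=2(J-M)$, $ML=(n-2)M+J$, and their $N$-analogues. All six hold. The blocks then assemble to $L^2+J=(2n-4)I+(n-4)L+3J$ and $LM+ML=(n-4)M+3J$, as required.

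The paper instead argues pair by pair. It fixes $v,w$ of each type (dart or non-dart, inside a layer or between layers), counts the middle vertices $(u,r,i)$ of 2-paths directly, and leaves symmetric cases as ``similar''. The quasigroup input is the same in both routes, namely unique solvability. Your organization has some advantages:
\begin{itemize}
\item It covers every entry of $A^2$ at once, including the diagonal value $t$.
\item It checks in-regularity $JA=kJ$ explicitly, which the paper does not mention.
\item It makes the orientation bookkeeping ($xy$ in $M$, $yx$ in $N$) mechanical. The paper's printed case analysis actually mixes up the two inter-layer dart rules in places; for example, it treats $(z,yx,0)$ as an out-neighbour of $(x,y,0)$. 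Its counts are nonetheless right for the intended pairs.
\end{itemize}

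Your computation is also the matrix counterpart of the group-ring proof of Theorem~\ref{thm1}: $L$ corresponds to $\ul{A}+\ul{B}$, $M$ and $N$ to $\ul{C}$, and $MN=NM=J$ to $\ul{C}^2=\ul{H_0}$. It transfers verbatim to $\Gamma_2$, where the paper only says ``similarly''. There $A$ is block-circulant with first block row $(L\;M\;N)$, and the diagonal blocks become $L^2+MN+NM=L^2+2J$. The extra products $M^2=N^2=J$ enter the off-diagonal blocks, and together these give $(3n^2,4n-2,2n,n,4)$. The paper's pairwise count, in return, stays at the level of the combinatorial definition and needs no matrix set-up.
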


\noindent{\bf Proof.}

{\it Regularity:} From the definition immediately follows that each vertex has
out-degree $3n-2$. Also it is not hard to see that the number of undirected 
edges incident to any vertex is $2n-1$. 

{\it Existence of $\lambda$:}
For any two vertices $v,w\in V$ such that there is a dart from $v$ to $w$,
we need to prove that the number of oriented paths of length two from $v$ to $w$ 
is equal to $n-1$. 

\smallskip

Take vertices $v=(x,y,0)$ and $w=(x,z,0)$, $y\neq z$. We need to count how many vertices $(u,r,i)$
there exists such that $(x,y,0)\mapsto(u,r,i)\mapsto(x,z,0)$ is an oriented path 
of length two. 
For $i=0$, $u=x$ clearly all $r\neq y,z$ is giving such a path, therefore we have 
altogether $n-2$ of them. 
For $i=0$, $u\neq x$ necessarily $r=y$, but $y\neq z$ so $u=x$, a contradiction. 
If $i=1$ then $r=yx$, further $x=u(yx)$ and this equation has a unique solution $u$,
giving one good path. 
This means that altogether there are $n-1$ oriented path of length two from
$(x,y,0)$ to $(x,z,0)$. 

The proofs for the cases $v=(x,y,1)$, $w=(x,z,1)$;  $v=(x,y,0)$, $w=(z,y,0)$; 
and $v=(x,y,1)$, $w=(z,y,1)$ could be done similarly. 

\smallskip

Let us take $v=(x,y,0)$ and $w=(z,yx,0)$. For $i=0$ in the case when $u=x$ we are
getting equation $rx=yx$, which has exactly one solution $r$ for fixed $x,y$, 
but clearly, $r=y$ therefore $(u,r,0)=v$ and we do not have a 2-path.
For $i=0$ the assumption $r=y$ leads also to $(u,r,0)=v$. 
Now, if $i=1$, then necessarily $r=yx$, and all $u\neq z$ is giving a good 2-path. 
Thus, the number of oriented 2-path from $v$ to $w$ is $n-1$ in this case. 

Analogously, we can prove that the number of oriented 2-paths from $v=(x,y,1)$
to $w=(xy,z,0)$ is also $n-1$.

\medskip

{\it Existence of $\mu$:}

For any two vertices $v,w\in V$ such that there is no dart from $v$ to $w$,
we need to prove that the number of oriented paths of length two from $v$ to $w$
is equal to $3$. 

\smallskip

Let us take $v=(x,y,0)$ and $w=(\alpha,\beta,0)$, $x\neq \alpha$, $y\neq \beta$. 
We need to count how many vertices $(u,r,i)$ there exists such that 
$(x,y,0)\mapsto(u,r,i)\mapsto(x,z,0)$ is an oriented path of length two. 
For $i=0$ clearly $(x,\beta,0)$ and $(\alpha,y,0)$ are the only two such vertices.
For $i=1$ necessarily $r=yx$, and $u(yx)=\alpha$, which has exactly one solution for $u$.
So altogether we have three oriented 2-paths from $v$ to $w$. 

Similarly, we can see that there are three oriented 2-paths from $v=(x,y,1)$ to 
$w=(\alpha,\beta,1)$, $x\neq \alpha$, $y\neq \beta$.

Now, let us take $v=(x,y,0)$ and $w=(\alpha,\beta,1)$, $\beta\neq yx$. 
For $i=0$, necessarily $u=x$ or $r=y$. In the first case $\beta=rx$, which has
exactly one solution for $r$, while in the latter case $\beta=yu$, which has also
exactly one solution for $u$. For $i=1$ necessarily $r=yx$, and then $\alpha=u$ 
is giving a solution, while $\beta=yx$ gives a contradiction, so altogether there
are three oriented 2-paths from $v$ to $w$.

The case $v=(x,y,1)$, $w=(\alpha,\beta,0)$, $\alpha\neq xy$ can be done similarly. 

\rightline{$\square$}

Since the Cayley tables of quasigroups are Latin squares, and every Latin square
can be interpreted as a Cayley table of a quasigroup, we can provide a description 
of the previous construction purely in the terms of Latin squares.

For arbitrary positive integer $n\geq 2$ let $L$ be a Latin square corresponding to a quasigroup 
of order $n$, $M=\{1,2,\ldots,n\}$ and $N=\{n+1,n+2,\ldots,2n\}$.
Define a digraph with vertex set $V=(M\times N)\cup (N\times M)$. Let us define a few types of edges 
(darts):
$$E_1=\{(a,b)\sim(c,d)\,|\, (a,b),(c,d)\in V, a=c \textrm{ and } b\neq d\},$$ 
$$E_2=\{(a,b)\to (c,d)\,|\, (a,b),(c,d)\in V, a=d \textrm{ or } b=c\},$$
$$E_3=\{(a,b)\sim(c,d)\,|\, (a,b),(c,d)\in M\times N, L_{a,b-n}=L_{c,d-n}\},$$
$$E_4=\{(a,b)\sim(c,d)\,|\, (a,b),(c,d)\in N\times M, L_{b-n,a}=L_{d-n,c}\}.$$

We would like to notice that a digraph $\bar\Gamma$ with vertex set $V$ and edge (dart) set 
$\bar E = E_1 \cup E_2$ is a DSRG with parameter set $(2n^2,2n-1,n,n-1,1)$ and it was constructed 
in \cite{kp} from generalized quadrangles $GQ(1,n-1)$.

\begin{theorem}
The digraph $\Gamma$ with vertex set $V$ and edge (dart) set $E=E_1\cup E_2\cup E_3\cup E_4$
is a DSRG with parameters $(2n^2,3n-2,2n-1,n-1,3)$ and is equivalent to the graph
obtained in Theorem \ref{thm:4.1} with the aid of the same quasigroup.
\end{theorem}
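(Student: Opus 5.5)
The plan is to avoid recounting 2-paths. I will exhibit an explicit bijection $\varphi$ from $V(\Gamma_1)=I_n\times I_n\times\mathbb Z_2$ onto $V=(M\times N)\cup(N\times M)$. It should carry the dart set of $\Gamma_1$ onto $E$, possibly after reversing all darts. Theorem~\ref{thm:4.1} then gives the parameters $(2n^2,3n-2,2n-1,n-1,3)$ for $\Gamma$ directly, together with Proposition~\ref{prop1} if a reversal is needed. Equivalence with $\Gamma_1$ holds by definition, since $\Gamma\cong\Gamma_1$ or $\Gamma\cong\Gamma_1^T$.

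First I separate the two parts. The sets $E_1$, $E_3$ and $E_4$ only join vertices lying in the same part $M\times N$ or $N\times M$. The set $E_2$ joins the two parts, because $a=d$ or $b=c$ forces a change of part. Inside $M\times N$, two cells $(a,b)$ and $(c,d)$ are joined if they share the row ($E_1$) or carry the same symbol of $L$ ($E_3$). Since a Latin square is an orthogonal array, coordinatizing a cell by the pair (row, symbol) turns this part into a copy of $L_2(n)$. The same holds in $N\times M$, using (column, symbol) read off from the transposed indexing in $E_4$. Part $i$ of $\Gamma_1$ is also a copy of $L_2(n)$, with lines ``same $x$'' and ``same $y$''.

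So I set $\varphi(x,y,0)=(a,b)\in M\times N$, choosing $(a,b)$ so that the pair (row, symbol) equals a suitable pair built from $(x,y)$. I define $\varphi(x,y,1)$ on $N\times M$ in the analogous way. The coordinates are to be chosen so that the cross darts match. In $\Gamma_1$, $(x,y,0)$ sends darts to the whole line $\{(xy,z,1): z\in I_n\}$, and $(x,y,1)$ sends darts to the line $\{(z,yx,0): z\in I_n\}$. In $\Gamma$, the $E_2$-neighbours of $(a,b)$ are the vertices $(b,d)$ and $(c,a)$ of the other part, which lie on a row line or on a column line there. The natural attempt is to let one coordinate of $\varphi(x,y,1)$ be the product occurring in the cross dart, such as $xy$ or $yx$. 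Then the condition ``$b=c$'' or ``$a=d$'' becomes exactly the quasigroup equation in the definition of $\Gamma_1$. One uses that $u\mapsto uv$ and $u\mapsto vu$ are bijections, so that all lines are carried onto lines.

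The main obstacle is this matching of $E_2$ with the cross darts, and the first thing I will do is check degrees and directions. Each vertex $(a,b)$ meets $2n$ ordered pairs from $E_2$, counted with possible coincidences, whereas each vertex of $\Gamma_1$ has exactly $n$ out-darts into the other part. I expect $E_2$ to contain pairs in both directions. The plan is to check which of them are genuinely one-way and which are undirected, and to verify that the totals per vertex come out as $n$ out-darts across the parts and the right number of undirected edges. Together with the $2(n-1)$ lattice edges this should give out-degree $3n-2$ and $t=2n-1$. If the counts do not match for $\Gamma$ itself, I will try $\Gamma^T$, and failing that re-read which $E_2$ darts are meant to be oriented. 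Once the degree bookkeeping agrees, checking that $\varphi$ maps darts to darts is a four-case verification (part $0\to 0$, $1\to1$, $0\to1$, $1\to0$). Each case uses only that every row and column of $L$ is a permutation.
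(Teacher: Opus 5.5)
There is a genuine gap, and it starts at the degree check you postpone, which does not come out as you expect. The condition ``$a=d$ or $b=c$'' is invariant under interchanging $(a,b)$ and $(c,d)$, so every pair in $E_2$ is an undirected edge. As printed, $\Gamma$ is an undirected graph of valency $(n-1)+(2n-1)+(n-1)=4n-3$ with $\Gamma^T=\Gamma$. Neither sorting $E_2$ into one-way and two-way pairs nor passing to $\Gamma^T$ can then give $k=3n-2$, $t=2n-1$. You must replace $E_2$ by the one-sided rule $(a,b)\to(b,d)$, or its reverse. That is the reading under which $E_1\cup E_2$ is the DSRG$(2n^2,2n-1,n,n-1,1)$ quoted before the theorem. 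Likewise $L_{b-n,a}$ must be read as $L_{b,a-n}$, which your (column, symbol) coordinates on $N\times M$ already assume.

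Even with these readings, the step you leave open is writing down $\varphi$, and for general quasigroups no such $\varphi$ exists. Contrary to your last sentence, the Latin property alone does not suffice. For $n\ge 3$, an isomorphism $\Gamma_1\to\Gamma$ must preserve the halves, the classes of vertices with a common out-neighbourhood in the other half, and those out-neighbourhoods. If part~$0$ goes to $M\times N$, this forces $\varphi(x,y,0)=(f(y),\,n+g(xy))$ and $\varphi(x,y,1)=(n+g(x),\,f(yx))$ for bijections $f,g$.

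The $x$-lines of part~$0$ and the $y$-lines of part~$1$ must then land on symbol classes. So the symbol $f(y)\cdot g(xy)$ of $\varphi(x,y,0)$ may depend only on $x$, and the symbol $f(yx)\cdot g(x)$ of $\varphi(x,y,1)$ only on $y$. Write $u/v$ for the solution $s$ of $s\cdot v=u$, and substitute $p=y,\ q=xy$, respectively $p=yx,\ q=x$. Then $f(p)\cdot g(q)=h_0(q/p)=h_1(p/q)$ for bijections $h_0,h_1$, so $q/p$ must be a function of $p/q$. For loops this is the left inverse property.

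For a group your plan does work: $f=\mathrm{id}$ and $g(q)=q^{-1}$ give $\varphi(x,y,0)=(y,\,n+(xy)^{-1})$ and $\varphi(x,y,1)=(n+x^{-1},\,yx)$. It fails for $Q=\mathbb Z_4$ with $p\cdot q=p+\beta(q)$, $\beta=(0\,1)$. The pairs $(p,q)=(2,0)$ and $(1,1)$ both have $p/q=1$, but $q/p$ equals $2$ and $1$ respectively. Interchanging the halves leads to the same condition. Also $\Gamma\cong\Gamma_1^T$ is excluded: the vertices with a common out-neighbourhood across are pairwise non-adjacent in $\Gamma$ (a column), but form a line in $\Gamma_1^T$. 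So for this $Q$ the equivalence with $\Gamma_1$ fails under your reading, and Theorem~\ref{thm:4.1} cannot be transported to $\Gamma$ in general.

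The DSRG property therefore has to be verified directly, which is short. Write $A=A_1+A_2+A_3$ for the $E_1$-, $E_2$- and symbol parts. Let $J_0$ be the $0/1$ matrix of pairs in the same half, and $J_1=J-J_0$. Rows, columns and symbols are mutually transversal in each half, which gives:
$A_i^2=(n-1)I+(n-2)A_i$ for $i=1,3$;
$A_1A_3=A_3A_1=J_0-I-A_1-A_3$;
$A_2^2=J_0$ and $A_2A_1=(n-1)A_2$;
$A_1A_2=A_2A_3=A_3A_2=J_1-A_2$.
Summing yields $A^2=(2n-4)I+(n-4)A+3J$. This is exactly $tI+\lambda A+\mu(J-I-A)$ for $(2n^2,3n-2,2n-1,n-1,3)$, and it holds for every Latin square.
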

 
\medskip

\subsection{Second construction in terms of quasigroups (loops)}

Define a digraph $\Gamma_3$ of order $2n^2$, whose 
vertex set is $V(\Gamma_3)= I_n\times I_n\times\mathbb Z_2$. 
Let $c$ be any non-identity element of a loop $L$. 
The set $D(\Gamma_3)$ of darts is defined as follows:
\begin{itemize}
\item $(x,y,i)\mapsto(z,y,i)$ for all $i\in\mathbb Z_2$, $x,y,z\in I_n$, $x\neq z$;
\item $(x,y,i)\mapsto(x,z,i)$ for all $i\in\mathbb Z_2$, $x,y,z\in I_n$, $y\neq z$;
\item $(x,y,0)\mapsto(xy,z,1)$ for all $z\in  I_n$;
\item $(x,y,1)\mapsto(z,yx,0)$ for all $z\in  I_n$;
\item $(x,y,0)\mapsto(c(xy),z,1)$ for all $z\in  I_n$;
\item $(x,y,1)\mapsto(z,(yx)c,0)$ for all $z\in  I_n$.
\end{itemize}

\begin{theorem}
The digraph $\Gamma_3$ is a DSRG with parameter set $(2n^2,4n-2,2n+2,n+2,6)$.
\end{theorem}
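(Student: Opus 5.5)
The plan is to verify the matrix identity $A^2=tI+\lambda A+\mu(J-I-A)$, i.e.
\[
A^2=(2n-4)I+(n-4)A+6J ,
\]
for the adjacency matrix $A$ of $\Gamma_3$. I will work blockwise with respect to the partition $V(\Gamma_3)=V_0\cup V_1$, where $V_i=I_n\times I_n\times\{i\}$. Write
\[
A=\begin{pmatrix} L & P\\ Q & L\end{pmatrix}.
\]
Here $L$ is the adjacency matrix of the square lattice graph $L_2(n)$ on each layer. The matrix $P$ encodes the darts $(x,y,0)\mapsto(a,z,1)$, present iff $a\in\{xy,\,c(xy)\}$. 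The matrix $Q$ encodes the darts $(x,y,1)\mapsto(z,b,0)$, present iff $b\in\{yx,\,(yx)c\}$.

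Regularity is immediate from this description. Since $c\neq e$ and $L$ is a Latin square, $xy\neq c(xy)$ and $yx\neq (yx)c$. Hence each vertex has $2n-2$ out-neighbours in its own layer and $2n$ in the other, giving out-degree $4n-2$. The same count works for in-degrees, using unique solvability of $ur=a$. The value $t=2n+2$ will drop out as the diagonal entry of $A^2$, so it need not be counted separately. The example of $L_2(n)$ gives
\[
L^2=(2n-4)I+(n-4)L+2J .
\]

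\emph{Diagonal blocks.} These equal $L^2+PQ$ and $L^2+QP$. I claim $PQ=4J$. A 2-path $(x,y,0)\to(a,z,1)\to(w,b,0)$ needs $a\in\{xy,c(xy)\}$, which gives 2 choices. It also needs $za=b$ or $(za)c=b$. For fixed $a$ and $b$ each of these equations has a unique solution $z$, and the two solutions are distinct because $c\neq e$. So every entry of $PQ$ is $4$, and symmetrically $QP=4J$. Then $L^2+4J=(2n-4)I+(n-4)L+6J$, which is exactly the required diagonal block.

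\emph{Off-diagonal blocks.} Here I need $LP+PL=(n-4)P+6J$, so the entry for target $(a,z,1)$ must be $n+2$ when $a\in\{xy,c(xy)\}$ and $6$ otherwise; the same must hold for $LQ+QL$ versus $Q$.
\begin{itemize}
\item For $PL$: a path first jumps to one of the two allowed rows of layer $1$ and then moves within layer $1$. If $a$ is an allowed row, there are $n-1$ column moves inside row $a$ plus one row move from the other allowed row, so $n$ paths. Otherwise there is one row move from each allowed row, so $2$ paths.
\item For $LP$: a lattice neighbour $(u,r,0)$ of $(x,y,0)$ must satisfy $ur\in\{a,\,c\backslash a\}$, where $c\backslash a$ denotes the left division solution. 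In the row $u=x$ and in the column $r=y$, each of these two target values is hit exactly once, except that a hit is lost when it would be $(x,y)$ itself, namely when the value equals $xy$. This gives $4$ paths if $a\notin\{xy,c(xy)\}$. It gives $2$ paths if $a$ is allowed, since exactly one of $a$ and $c\backslash a$ equals $xy$, as $c\neq e$.
\end{itemize}
Summing, the entry is $n+2$ or $6$ as required. The block $LQ+QL$ is the mirror computation with right multiplication by $c$.

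The main obstacle is this off-diagonal bookkeeping. In particular one must check the exclusion $r=y$ and $u=x$ against the left and right divisions correctly, and confirm that the two allowed rows or columns are distinct. That distinctness is the only place where $c\neq e$, and hence a loop with identity rather than a bare quasigroup, is used.
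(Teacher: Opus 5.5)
Your proof is correct, but it is organized differently from the paper's. The paper proves this theorem only by reference to Theorem~\ref{thm:4.1}. That means counting directed $2$-paths pair by pair, sorted by type: same row or column of a layer, same layer but nonadjacent, and across layers, adjacent or not. Regularity and $t$ are checked separately there, and most cases are left as ``similarly''.

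You instead verify $A^2=(2n-4)I+(n-4)A+6J$ in block form.
\begin{itemize}
\item The known identity for $L_2(n)$ together with $PQ=QP=4J$ settles every same-layer pair at once. This includes the diagonal, so $t=2n+2$ comes for free.
\item A single count of $LP+PL$, with its mirror $QL+LQ$, settles every cross-layer pair.
\end{itemize}
The ingredients are the same as the paper's: unique solvability in the Latin square, and the facts $cm\neq m$ and $mc\neq m$ for $c\neq e$. Your version, however, is exhaustive rather than sketched. It is in effect the quasigroup counterpart of the group-ring computation in Theorem~\ref{thm:3.3}, where $(\underline{C}+\underline{D})^2=4\underline{H_0}$ plays the role of your $PQ=QP=4J$.

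Your block form also carries over almost unchanged to $\Gamma_2$ and $\Gamma_4$. There $A$ becomes a block-circulant $3\times 3$ matrix with blocks $L,P,Q$. The only new terms are $P^2$ and $Q^2$ in the off-diagonal blocks, which equal $J$ and $4J$ respectively. The paper's pairwise count, for its part, stays closer to the combinatorial definition of a DSRG and treats all four constructions in one uniform style.
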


\noindent{\bf Proof.}
The proof of this theorem can be executed analogously as for Theorem \ref{thm:4.1}.

\rightline{$\square$}

\section{Constructions on $3n^2$ vertices}

In this section we are providing one more generalization
of our previous constructions. This time we increase the number
of copies of the ``$n\times n$ grid'' from two to three.

\subsection{Third construction in terms of quasigroups (loops)}

Define a digraph $\Gamma_2$ of order $3n^2$, whose vertex set is 
$V=I_n\times I_n\times\mathbb Z_3$. The set $D$ of darts is defined 
as follows:
\begin{itemize}
\item $(x,y,i)\mapsto(z,y,i)$ for all $i\in\mathbb Z_3$, $x,y,z\in I_n$, $x\neq z$;
\item $(x,y,i)\mapsto(x,z,i)$ for all $i\in\mathbb Z_3$, $x,y,z\in I_n$, $y\neq z$;
\item $(x,y,i)\mapsto(xy,z,i+1)$ for all $i\in\mathbb Z_3$, and $z\in  I_n$;
\item $(x,y,i)\mapsto(z,yx,i-1)$ for all $i\in\mathbb Z_3$, and $z\in  I_n$.
\end{itemize}

\begin{theorem}
The digraph $\Gamma_2$ is a DSRG with parameter set $(3n^2,4n-2,2n,n,4)$.
\end{theorem}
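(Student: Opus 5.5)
The plan is to use block matrices and avoid chasing vertex cases one at a time. Order the vertices layer by layer, $V=\bigcup_{i\in\mathbb Z_3} I_n\times I_n\times\{i\}$. Let $G$ be the $n^2\times n^2$ adjacency matrix of the square lattice graph $L_2(n)$ on one layer. Define $P$ by $P_{(x,y),(u,r)}=1$ iff $u=xy$, and $Q$ by $Q_{(x,y),(u,r)}=1$ iff $r=yx$. Then the adjacency matrix of $\Gamma_2$ is the block circulant
\[
A=\begin{pmatrix} G & P & Q\\ Q & G & P\\ P & Q & G\end{pmatrix},
\]
since the arcs go from layer $i$ to layer $i+1$ via $P$ and to layer $i-1$ via $Q$. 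Each of $G,P,Q$ has constant row and column sums: $2n-2$, $n$ and $n$. Hence $AJ=JA=(4n-2)J$. The target identity $A^2=2nI+nA+4(J-I-A)$ then splits into two block identities. One is for the diagonal blocks. The other is for the $(i,i+1)$ blocks; the $(i,i-1)$ blocks follow by the symmetric argument with rows and columns exchanged, or, as a fallback, by repeating the same computation with the roles of $P$ and $Q$ swapped.

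\emph{Diagonal blocks.} Here $(A^2)_{ii}=G^2+PQ+QP$. We know $G^2=(2n-2)I+(n-2)G+2(J-I-G)$ from the strong regularity of $L_2(n)$. For $PQ$, start at $(x,y)$ and use $P$ to reach $(xy,r)$ for any $r$. Then $Q$ reaches the column $r(xy)$, with any row. As $r$ runs over $I_n$, the value $r(xy)$ runs over every column exactly once, by the Latin property. Hence $PQ=J$, and likewise $QP=J$. Then $G^2+2J=(2n-4)I+(n-4)G+4J$, which is exactly the diagonal block of $2nI+nA+4(J-I-A)$. The diagonal of $A^2$ also gives $t=2n$, confirming the undirected-edge count.

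\emph{Off-diagonal blocks.} We must show $GP+PG+Q^2=nP+4(J-P)$, that is, count the 2-paths from $(x,y,i)$ to $(\alpha,\beta,i+1)$.
\begin{itemize}
\item $GP$: the intermediate vertex is $(x,r)$ with $r\neq y$, or $(u,y)$ with $u\ne x$, and the target row must be $\alpha=xr$ or $\alpha=uy$ respectively. Each equation has a unique solution, which must be excluded when it gives back the starting vertex. So $GP$ contributes $2$ if $\alpha\neq xy$ and $0$ if $\alpha=xy$.
\item $PG$: the middle vertex is $(xy,r)$, which must share a row or column with $(\alpha,\beta)$ without being equal to it. This gives $n-1$ paths if $\alpha=xy$. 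If $\alpha\neq xy$ it gives exactly $1$ path, through the middle vertex $(xy,\beta)$.
\item $Q^2$: the first step reaches $(u,yx)$ in layer $i-1$, the second reaches $(w,(yx)u)$ in layer $i+1$. The condition $(yx)u=\beta$ determines $u$ uniquely, so $Q^2$ contributes exactly $1$ in every case.
\end{itemize}
The totals are $0+(n-1)+1=n$ when $\alpha=xy$ (an arc) and $2+1+1=4$ otherwise. This is the required identity, so $\lambda=n$ and $\mu=4$.

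The main obstacle is the bookkeeping in these two mixed products. One must verify that the excluded solutions are precisely those where the 2-path would pass through the endpoint itself, and that the Latin-square uniqueness (not associativity or an identity element) is all that is used. This is what lets the result hold for an arbitrary quasigroup. Within-layer targets and the case analysis for $\lambda$ versus $\mu$ are then read off directly from the block identity, mirroring the proof of Theorem~\ref{thm:4.1}.
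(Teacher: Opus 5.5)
Your proof is correct, and it is organized differently from the paper's. The paper gives no separate argument for this theorem. It only says the verification goes ``similarly'' to Theorem~\ref{thm:4.1}: fix an ordered pair of vertices of each type (same row, same column, non-collinear in one layer, arc or non-arc between layers) and count intermediate vertices layer by layer.

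You instead fix the intermediate layer and compute its contribution ($G^2$, $PQ$, $GP$, $Q^2$, and so on) for all pairs at once. The block-circulant form of $A$ then reduces everything to the diagonal block and one type of off-diagonal block. The identity $PQ=QP=J$ settles all same-layer pairs via the known strong regularity of $L_2(n)$. The $\lambda$ and $\mu$ cases come out together from a single table of contributions.

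What your route buys is completeness and transparency. Every pair of vertices is visibly covered, and one sees exactly where $\mathbb Z_3$ enters ($i-2\equiv i+1$ in the $Q^2$ term). One also sees that only unique solvability in the quasigroup is used. The paper's vertex-by-vertex route is more elementary and parallels Theorem~\ref{thm:4.1} directly, but it leaves essentially all cases for $\Gamma_2$ to the reader.

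Your appeal to ``the symmetric argument'' for the $(i,i-1)$ blocks can be made a one-line proof. Let $T$ be the permutation matrix of $(x,y)\mapsto(y,x)$ on one layer. Then $TGT=G$, $TJT=J$ and $TPT=Q$. Conjugating $GP+PG+Q^2=nP+4(J-P)$ by $T$ gives exactly $GQ+QG+P^2=nQ+4(J-Q)$. Equivalently, $(x,y,i)\mapsto(y,x,-i)$ is an automorphism of $\Gamma_2$.
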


\subsection{Fourth construction in terms of quasigroups (loops)}

Define a digraph $\Gamma_4$ of order $3n^2$, whose 
vertex set is $V(\Gamma_4)= I_n\times I_n\times\mathbb Z_3$. 
Let $c$ be any non-identity element of the loop $L$. 
The set $D(\Gamma_4)$ of darts is defined as follows:
\begin{itemize}
\item $(x,y,i)\mapsto(z,y,i)$ for all $i\in\mathbb Z_3$, $x,y,z\in I_n$, $x\neq z$;
\item $(x,y,i)\mapsto(x,z,i)$ for all $i\in\mathbb Z_3$, $x,y,z\in I_n$, $y\neq z$;
\item $(x,y,i)\mapsto(xy,z,i+1)$ for all $z\in  I_n$;
\item $(x,y,i)\mapsto(z,yx,i-1)$ for all $z\in  I_n$;
\item $(x,y,i)\mapsto(c(xy),z,i+1)$ for all $z\in  I_n$;
\item $(x,y,i)\mapsto(z,(yx)c,i-1)$ for all $z\in  I_n$.
\end{itemize}

\begin{theorem}
The digraph $\Gamma_4$ is a DSRG with parameter set $(3n^2,6n-2,2n+6,n+6,10)$.
\end{theorem}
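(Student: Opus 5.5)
The plan is to verify the matrix identity $A^2=tI+\lambda A+\mu(J-I-A)$ blockwise with respect to the partition of $V$ into the three layers $I_n\times I_n\times\{i\}$, $i\in\mathbb Z_3$. The adjacency matrix of $\Gamma_4$ has the form
\[
A=\begin{pmatrix} G & F & B\\ B & G & F\\ F & B & G\end{pmatrix},
\]
where the blocks are indexed cyclically by $\mathbb Z_3$ and are defined as follows.
\begin{itemize}
\item $G$ is the adjacency matrix of the square lattice graph $L_2(n)$. This is an SRG, so $G^2=(2n-2)I+(n-2)G+2(J-I-G)$.
\item $F$ is the forward block. Its $((x,y),(u,z))$ entry is $1$ if and only if $u\in T_{xy}=\{xy,\,c(xy)\}$.
\item $B$ is the backward block. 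Its $((x,y),(u,z))$ entry is $1$ if and only if $z\in\{yx,(yx)c\}$.
\end{itemize}
Since $c\neq e$ and $L$ is a loop, $c(xy)\neq xy$ and $(yx)c\neq yx$. Hence every row of $F$ and of $B$ has exactly $2n$ ones, and every column does too, because for a fixed target row $u$ the equations $xy=u$ and $c(xy)=u$ each have $n$ solutions $(x,y)$. This gives out- and in-degree $2(n-1)+4n=6n-2$.

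First I will compute the diagonal block $G^2+FB+BF$. For $FB$, a path from $(x,y)$ first goes to $(u,z)$ with $u\in T_{xy}$ and $z$ arbitrary, and must then reach a column in $\{zu,(zu)c\}$. For a fixed target $(\alpha,\beta)$ and each of the two values of $u$, the equations $zu=\beta$ and $(zu)c=\beta$ each have a unique solution $z$, so $FB=4J$. Symmetrically, each of the two backward columns $r$ and each of the two forward choices give a unique $z$, so $BF=4J$. Hence the diagonal block equals $G^2+8J$, and its coefficients are $2n-2+8=2n+6=t$, $n-2+8=n+6=\lambda$ and $2+8=10=\mu$, exactly as required.

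Next I will compute the block from layer $i$ to layer $i+1$. It equals $GF+FG+B^2$, because two backward steps land in layer $i-2=i+1$; this wrap-around contribution is the new feature compared with the $2n^2$ case. For $B^2$, each of the $2\times 2$ choices of backward column and second backward shift determines a unique middle $z$, so $B^2=4J$. For $FG$, take a target $(\alpha,\beta)$. If $\alpha\in T_{xy}$, there are $n-1$ paths through $(\alpha,z)$ with $z\ne\beta$, plus one path through $(u,\beta)$ with $u$ the other element of $T_{xy}$, giving $n$ in total. If $\alpha\notin T_{xy}$, there are exactly $2$ paths, through the two vertices $(u,\beta)$ with $u\in T_{xy}$. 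Thus $FG=nF+2(J-F)$.

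For $GF$, consider the grid neighbours $(x,y')$ with $y'\neq y$ and $(x',y)$ with $x'\neq x$. The four equations $xy'=\alpha$, $c(xy')=\alpha$, $x'y=\alpha$ and $c(x'y)=\alpha$ each have a unique solution, by the Latin property and left cancellation by $c$. Exactly when $\alpha\in T_{xy}$, two of these solutions coincide with $v$ itself and are excluded. For example, when $\alpha=xy$ they are $y'=y$ and $x'=x$. Hence $GF=2F+4(J-F)$.

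Summing, the block is $(n+6)F+10(J-F)$, which is correct because this block has no diagonal. The block from layer $i$ to layer $i-1$, namely $GB+BG+F^2$, is handled by the mirror-image computation, with rows and columns swapped and left and right multiplication interchanged. I expect the main obstacle to be the careful bookkeeping in $GF$ and $BG$, that is, deciding exactly which solutions coincide with the starting vertex. This is also precisely where the hypothesis that $c$ is a non-identity element of a loop is used: it guarantees that $T_{xy}$ has two distinct elements and that the excluded solutions are counted exactly twice.
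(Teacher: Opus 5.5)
Your proof is correct, but it is organized differently from the paper. The paper gives no written proof of this theorem. It only says the result follows ``similarly'' to the proof of Theorem~\ref{thm:4.1}, which fixes the type of the ordered pair $(v,w)$ (same layer and same row or column, same layer and neither, different layers with or without a dart) and counts middle vertices case by case. You instead write $A$ as a block-circulant matrix over $\mathbb Z_3$ and reduce $A^2=tI+\lambda A+\mu(J-I-A)$ to eight block products. In other words, you sort $2$-paths by the layer of the middle vertex rather than by the type of the endpoints. This buys three things:
\begin{itemize}
\item The diagonal block follows at once from the SRG identity for $L_2(n)$ together with $FB=BF=4J$.
\item The wrap-around terms $B^2=F^2=4J$ appear explicitly. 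They are exactly what separates the $3n^2$ case from the $2n^2$ case, and they are easy to overlook in a pair-by-pair count.
\item The same template proves all four theorems of Sections~4 and~5: let $T_{xy}$ and its backward analogue have one or two elements, and use two or three layers.
\end{itemize}
The pair-by-pair method stays closer to the definition of a DSRG, but it needs many more cases, most of which the paper leaves to the reader. One detail you leave implicit is that in each count the two solutions are distinct, for example $zu=\beta$ versus $(zu)c=\beta$, or $xy'=\alpha$ versus $c(xy')=\alpha$. This is the same fixed-point-freeness, $wc\neq w$ and $cw\neq w$ for $c\neq e$, that you already invoke to get $|T_{xy}|=2$.
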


The proofs of the theorems in this section can be done similarly like in the previous section
using easy counting arguments on directed paths.

\section{Concluding remarks}

The four constructions above yield DSRGs from quasigroups and loops of
arbitrary admissible order. The first two families have order $2n^2$ and
parameters
\[
(2n^2,3n-2,2n-1,n-1,3)
\quad\hbox{and}\quad
(2n^2,4n-2,2n+2,n+2,6),
\]
while the remaining two have order $3n^2$ and parameters
\[
(3n^2,4n-2,2n,n,4)
\quad\hbox{and}\quad
(3n^2,6n-2,2n+6,n+6,10).
\]
The constructions use only the defining equations of a quasigroup, together
with an identity element and a chosen nonidentity element where explicitly
required. Consequently, they do not rely on associativity or on the
existence of a finite field.

For completeness, Table~\ref{tab:small-parameters} lists all parameter sets arising directly from our four constructions for which the order of
the resulting graph does not exceed \(110\). This upper bound was chosen to facilitate comparison with the table of directed strongly regular graphs maintained by Andries E. Brouwer \cite{ab}.

\begin{table}[ht]
\centering
\small
\setlength{\tabcolsep}{4pt}
\caption{Parameter sets obtained from Constructions 1--4
with graph order at most \(110\).}
\label{tab:small-parameters}
\begin{tabular}{|c|c|c|c|c|}
\hline
\(n\) & Construction 1 & Construction 2
      & Construction 3 & Construction 4\\
\hline
2 & \((8,4,3,1,3)\)
  & \((8,6,6,4,6)\)
  & \((12,6,4,2,4)\)
  & \((12,10,10,8,10)\)\\

3 & \((18,7,5,2,3)\)
  & \((18,10,8,5,6)\)
  & \((27,10,6,3,4)\)
  & \((27,16,12,9,10)\)\\

4 & \((32,10,7,3,3)\)
  & \((32,14,10,6,6)\)
  & \((48,14,8,4,4)\)
  & \((48,22,14,10,10)\)\\

5 & \((50,13,9,4,3)\)
  & \((50,18,12,7,6)\)
  & \((75,18,10,5,4)\)
  & \((75,28,16,11,10)\)\\

6 & \((72,16,11,5,3)\)
  & \((72,22,14,8,6)\)
  & \((108,22,12,6,4)\)
  & \((108,34,18,12,10)\)\\

7 & \((98,19,13,6,3)\)
  & \((98,26,16,9,6)\)
  & 
  & \\
\hline
\end{tabular}
\end{table}

\medskip

The very early results from this project have been presented at various
events on algebraic theory. Both authors attended workshops and conferences
on topics suitable for giving a talk on the achieved results. 
Here we list the most remarkable of them. 

\begin{itemize}
\item \emph{Modern Trends in Algebraic Graph Theory}, Villanova (PA), USA, June 2014.
\item \emph{Second Joint International Meeting of the
Israel Mathematical Union and the American Mathematical Society}, 
Tel Aviv, Israel, June 2014. 
\item \emph{Summer School on General Algebra and Ordered Sets}, 
Star\'a Lesn\'a, Slovakia, September 2014.
Slides available on:
\url{http://im.saske.sk/ssaos2014/talks/monday/Klin.pdf}
\end{itemize}

The present work also has its roots in an earlier attempt to bring directed strongly regular graphs into the developing framework of algebraic combinatorics and algebraic graph theory. In his lecture \emph{Automorphism groups of circulant graphs}, delivered at the conference \emph{Applicable Algebra} at the Mathematisches Forschungsinstitut Oberwolfach in January 1994, the second author discussed connections with the pioneering work of Duval on directed strongly regular graphs \cite{ko}. To the best of our knowledge, this presentation was among the earliest attempts to relate Duval's theory explicitly to emerging methods and perspectives in algebraic combinatorics and algebraic graph theory.

The present paper is a substantially revised and shortened version of an unpublished ma\-nu\-script from 2014 that was circulated among only a few colleagues. The current version focuses on rigorously established results and outlines of their proofs, sometimes too brief, 
while the conjectures included in the original manuscript have been omitted.

A much more detailed version of \cite{km} was created in 1997, seven years earlier, before the publication of the shorter article. Its copy, as a Preprint, was circulated between colleagues and is available from the authors on a special request.

\section*{Acknowledgements}

\indent This research was supported by the APVV Research Grants under number 22-0005, 23-0076 and
also by VEGA Research Grants 1/0069/23 and 1/0011/25.
The authors gratefully acknowledge Roman Nedela for his keen interest in and valuable support of their research on directed strongly regular graphs.
The authors are very pleased to thank Andries E. Brouwer
for his ongoing interest to our joint research,
and especially, to this text.

\end{document}